\renewcommand{\P}{\mathbb{P}}
\newcommand{\F}{\mathcal{F}}
\newcommand{\E}{\mathbb{E}}
\newcommand{\Real}{\mathbb{R}}
\newcommand{\ctg}{\mathrm{ctg}}
\newcommand{\Res}{\mathrm{Res}}
\renewcommand{\Re}{\mathrm{Re}}
\renewcommand{\Im}{\mathrm{Im}}
\numberwithin{equation}{section}
\theoremstyle{plain}
\newtheorem{thm}{Theorem}[section]
\newtheorem{lem}{Lemma}[section]
\theoremstyle{remark}
\newtheorem{rem}{Remark}[section]
\begin{document}

\begin{frontmatter}
\title{On the eigenproblem for Gaussian bridges}
\runtitle{On the eigenproblem for Gaussian bridges}

\begin{aug}
\author{\fnms{Pavel} \snm{Chigansky}\thanksref{a}\ead[label=e1]{pchiga@mscc.huji.ac.il}}
\author{\fnms{Marina} \snm{Kleptsyna}\thanksref{b,e2}\ead[label=e2,mark]{marina.kleptsyna@univ-lemans.fr}}
\and
\author{\fnms{Dmytro} \snm{Marushkevych}\thanksref{b,e3}\ead[label=e3,mark]{dmytro.marushkevych.etu@univ-lemans.fr}}

\address[a]{Department of Statistics,
The Hebrew University,
Mount Scopus, Jerusalem 91905,
Israel.
\printead{e1}}

\address[b]{Laboratoire de Statistique et Processus,
Le Mans Universit\'{e},
France.
\printead{e2,e3}
}

\runauthor{P. Chigansky, M. Kleptsyna and D. Marushkevych }

\affiliation{The Hebrew University and Le Mans Universit\'{e}}

\end{aug}

\begin{abstract}
Spectral decomposition of the covariance operator is one of the main building blocks in the theory 
and applications of Gaussian processes. Unfortunately it is notoriously hard to derive in a closed form. 
In this paper we consider the eigenproblem for Gaussian bridges. 
Given a {\em base} process, its bridge is obtained by conditioning the trajectories to start and 
terminate at the given points. What can be said about the spectrum of a bridge, given the spectrum 
of its base process? We show how this question can be answered asymptotically for a family of processes, 
including the fractional Brownian motion.
\end{abstract}

\begin{keyword}
\kwd{eigenproblem}
\kwd{Gaussian processes}
\kwd{Karhunen-Lo\`{e}ve expansion}
\kwd{fractional Brownian motion}
\end{keyword}

\end{frontmatter}

\section{Introduction}

The eigenproblem for a centered process $X=(X_t, t\in [0,1])$ on a probability space $(\Omega, \F,\P)$ 
consists of finding all pairs $(\lambda,\varphi)$ satisfying the equation
\begin{equation}\label{eig}
\int_0^1 K(s,t) \varphi(s)ds = \lambda \varphi(t), \quad t\in [0,1],
\end{equation}
where $K(s,t)=\E X_sX_t$ is the covariance function of $X$. If $K$ is square integrable, 
this problem is well known to have countably many solutions:
the eigenvalues $\lambda_n$, $n\in \mathbb{N}$ are nonnegative and converge to zero, when put in the decreasing order, and
the corresponding eigenfunctions $\varphi_n$ form an orthonormal basis in $L^2([0,1])$. 

One of the earliest and most influential implications of this result is the Karhunen--Lo\'{e}ve theorem, which asserts 
that $X$ admits the representation as the $L^2(\Omega)$-convergent series 
\begin{equation}\label{KLexp}
X_t = \sum_{n=1}^\infty \langle X,\varphi_n \rangle \varphi_n(t)
\end{equation}
where the scalar products $\langle X,\varphi_n \rangle =\int_0^1 X_s \varphi_n(s)ds$ are orthogonal zero mean 
random variables with variance $\E \langle X,\varphi_n \rangle^2 = \lambda_n$. 

Spectral decomposition \eqref{KLexp} is useful in both theory and applications (see, e.g., \cite{BTA04}, \cite{L12}).
However explicit solutions to eigenproblem \eqref{eig} are notoriously hard to find and they are available only 
in special cases  \cite{Istas06}, \cite{DM03, DM08}, \cite{N09b}, \cite{NN04ptrf, NN04tpa},  
including the Brownian motion with $K(s,t)=t\wedge s$: 
\begin{equation}\label{eigBm}
\lambda_n =  \frac 1 {\big((n - \tfrac 1 2)\pi\big)^{2} } \quad\text{and}\quad \varphi_n(t) = \sqrt 2 \sin \big((n -\tfrac 1 2)\pi t\big)
\end{equation}
and the Brownian bridge with covariance function $\widetilde{K}(s,t)=s\wedge t -st$:
\begin{equation}
\label{eigBb}
\widetilde\lambda_n =  \frac 1{\big( \pi  n\big)^{2}}   \quad\text{and}\quad \widetilde\varphi_n(t) = \sqrt 2 \sin \big(\pi  n t\big).
\end{equation}
These formulas are obtained by reduction of the eigenproblem for integral operators to  explicitly solvable boundary 
value problems for ordinary differential equations. Similar kind of reduction also works for a number of 
processes, related to the Brownian motion, for which \eqref{eig} can be put into the framework of the Sturm-Liouville 
type theory \cite{NN04ptrf}.

In this paper we consider the eigenproblem for Gaussian bridges. For a {\em base}  process $X=(X_t, t\in [0,1])$,
the corresponding bridge $\widetilde X=(\widetilde X_t, t\in [0,1])$ is obtained by ``restricting" the trajectories to 
start and terminate at the given points. For Gaussian processes, such restriction amounts to the usual conditioning. 
Hence if $X$ is a centered Gaussian {\em base} process with the starting point $X_0=0$ and covariance function 
$K(s,t)$, the corresponding zero-to-zero bridge  
is the centered Gaussian process 
$$
\widetilde {X}_t =  X_t - \frac{K(t,1)}{K(1,1)}X_1, \quad t\in [0,1]
$$
with the covariance function 
\begin{equation}\label{barK}
\widetilde{K}(s,t) = K(s,t) - \frac{K(s,1)K(t,1)}{K(1,1)}. 
\end{equation}
Various aspects of general Gaussian bridges are discussed in \cite{GSV07}, \cite{SY14}. 
Besides mathematical interest, they are important ingredients in applications, 
such as statistical hypothesis testing \cite{Lehmann}, exact sampling of diffusions \cite{BS05}, etc.  

\medskip

The covariance operator of the bridge with kernel \eqref{barK} is a rank one perturbation of the 
covariance operator of its base process.  This explains similarity between \eqref{eigBm} and \eqref{eigBb} and 
suggests that the spectra of the two processes must be closely related in general. 
This is indeed the case and one can find an exact expression for the Fredholm determinant of $\widetilde K$ 
in terms of the Fredholm determinant of $K$ even for more general finite rank perturbations (see, e.g.,  \cite{Su72}, Ch.II, 4.6 in 
\cite{KK58}). 
As mentioned above, the precise formulas for the eigenvalues and eigenfunctions of $K$ are rarely known; 
however, the exact asymptotic approximation can be more tractable. 
This raises the following question:

\medskip
\begin{equation}\label{Q} \tag{Q}
\text{ \begin{minipage}{0.8\textwidth}
\em
Can the exact asymptotics of the eigenvalues 
and the eigenfunctions for the bridge be deduced from those of the base process ? 
\end{minipage}
}
\end{equation}

\medskip

A rough answer to this question is given by the general perturbation theory \cite{Kato}, which implies 
that the eigenvalues of $K$ and $\widetilde K$ agree in the leading asymptotic term, as it happens for \eqref{eigBm} and \eqref{eigBb}
(see, e.g., the proof of Lemma 2 in \cite{Br03a}).  
More delicate spectral discrepancies are harder to exhibit and seem to be highly sensitive to the perturbation structure. 
This is vividly demonstrated in the paper \cite{N09b}, where the kernels of the following form  are considered, cf. \eqref{barK}:
\begin{equation}\label{KQ}
\widetilde {K}_Q(s,t) = K(s,t) + Q \psi(s)\psi(t).
\end{equation} 
Here $Q$ is a scalar real valued parameter and $\psi$ is a function in the range of $K$. It turns out that 
for any $Q$ greater than a certain critical value $Q^*$, the spectrum of $\widetilde K_Q$ coincides with that of $K$ 
in the first two asymptotic terms. For $Q:=Q^*$ the spectra depart in the second term. The deviation is quantified in 
\cite{N09b}, when $\psi$ is an image of an $L^2([0,1])$ function, under the action of $K$.
The bridge process under consideration corresponds precisely to the critical case, but with 
$\psi(x)=K(1,x)$ being an image of the distribution $\delta(t-1)$, rather than of a square integrable function; 
hence the approach of \cite{N09b} is not directly applicable here. 

In this paper we will take a different route towards answering the above question, using the particular 
structure of the perturbation inherent to bridges.  
Observe that the eigenproblem $\widetilde{K}  \widetilde\varphi =   \widetilde\lambda  \widetilde\varphi$  
can be written in terms of the covariance operator of the base process  
\begin{equation}\label{beig}
\int_0^1 K(s,t)\widetilde\varphi(s)ds - K(1,t) \int_0^1 K(1,s)\widetilde\varphi(s)ds = \widetilde\lambda \widetilde\varphi(t), \quad t\in [0,1],
\end{equation}
where, without loss of generality, $X$ is assumed to be normalized so that $K(1,1)=1$.  
Taking scalar product with the eigenfunction $\varphi_n$ of $K$ gives
\begin{equation}
\label{phisp}
\langle \widetilde\varphi, \varphi_n\rangle = c  \frac{\lambda_n }{\lambda_n  -\widetilde\lambda}\varphi_n(1),\qquad \widetilde \lambda\not \in  \{\lambda_1,\lambda_2, ...\}
\end{equation}
where  $\displaystyle c:= \int_0^1 K(1,s)\widetilde\varphi(s)ds$. If $K(s,t)$ is continuous, by Mercer's theorem 
$$
K(s,t) = \sum_{n\ge 1}\lambda_n \varphi_n(t)\varphi_n(s) 
$$
where the convergence is absolute and uniform, and hence, in view of \eqref{phisp},
$$
c=\int_0^1 K(1,s)\widetilde\varphi(s)ds =  \sum_{n\ge 1}\lambda_n \varphi_n(1)\langle \widetilde\varphi, \varphi_n  \rangle 
=c\sum_{n\ge 1}  \frac{\lambda_n^2 }{\lambda_n  -\widetilde\lambda}\varphi_n(1)^2
$$
Since $\sum_{n\ge 1}\lambda_n \varphi_n(1)^2=K(1,1)=1$ and $c\ne 0$ whenever $\widetilde \lambda\not\in \{\lambda_1, \lambda_2, ...\}$, 
we obtain the following transcendental equation for the eigenvalues of the bridge
\begin{equation}\label{beq} 
\sum_{k=1}^\infty \frac{\lambda_n }{\lambda_n  -\widetilde\lambda}\varphi_n(1)^2  =0,
\end{equation}
and the corresponding expression for its eigenfunctions:
\begin{equation}
\label{phit}
\widetilde \varphi(t) = c \sum_{n\ge 1}   \frac{\lambda_n }{\lambda_n  -\widetilde\lambda}\varphi_n(1)\varphi_n(t), \quad t\in [0,1].
\end{equation}
Note that the roots of \eqref{beq} are not determined solely by the eigenvalues of the base process, but also require some information 
on its eigenfunctions.

The objective of this paper is to show how equations  \eqref{beq} and \eqref{phit} can be used to construct  
asymptotic approximation for the solutions to the bridge eigenproblem \eqref{beig}, given the exact asymptotics of the 
eigenvalues and eigenfunctions of the corresponding base process.

\section{The main result}

For definiteness we will work with a particular process, though the same approach applies whenever 
similar spectral approximation  for the base process is available
(as, e.g., for the processes considered in \cite{ChKM1}).  
Our study case will be the fractional Brownian motion (fBm), that is, the centered Gaussian process 
$B^H=(B^H_t, t\in [0,1])$ with covariance  function 
\begin{equation}\label{Kst}
K(s,t) = \tfrac 12\left(t^{2H}+s^{2H}-|t-s|^{2H}\right), \quad s,t\in [0,1],
\end{equation}
where $H\in (0,1]$ is its Hurst exponent. 

This is the only $H$-selfsimilar Gaussian process with stationary increments.
For $H=\frac 1 2$ it coincides with the standard Brownian motion, but otherwise has quite different properties. 
In particular, for $H\ne \frac 1 2$, it is neither a semimartingale nor a Markov process. For $H>\frac 12$ the covariance 
sequence of its increments on integers is not summable. This long range dependence property 
makes the fBm a powerful tool in modelling, see \cite{PT17}. 

The fBm has been extensively studied since its introduction in \cite{MvN68} (see, e.g.,  \cite{EM02}, \cite{M08}, \cite{BLL14}, \cite{PT17})
and there seems to be little hope to obtain exact solutions to \eqref{eig}, see \cite{M82}. Hence efficient approximations 
are of significant interest. A few largest eigenvalues and the corresponding eigenfunctions can be approximated numerically, 
as, e.g., in \cite{VT13}, but  relative accuracy of such approximations quickly deteriorates as the $\lambda_n$'s get smaller and, 
in our experience, the problem becomes computationally intractable already for $n\ge 50$.  

\medskip 

Smaller eigenvalues and the corresponding eigenfunctions can be approximated using the following asymptotics 
(see \cite{Br03a, Br03b}, \cite{NN04tpa}, \cite{LP04} for earlier results):

\begin{thm}[Theorem 2.1 in \cite{ChK}]\label{thm-ChK}
\

\medskip

\noindent
{\bf 1.} For the fractional Brownian motion with $H\in (0,1)$, the ordered sequence of the eigenvalues satisfies  
\begin{equation}
\label{lambda}
\lambda_n =  \frac{\sin (\pi H)\Gamma(2H+1)}{ \nu_n^{2H+1}} \qquad n=1,2,...
\end{equation}
where  
$
\nu_n =  \pi n + \pi\gamma_H+O(n^{-1})
$
as $n\to\infty$ and\footnote{constants $\gamma_H$ and $\eta_H$ appear in \cite{ChK} in a slightly more subtle, but equivalent form  }
\begin{equation}
\label{ellH}
 \gamma_H := -\frac 1 2 - \frac {1}{ 2}  \frac { (H-\frac 1 2)^2}{ H+\frac 1 2}.
\end{equation}

\medskip
\noindent
{\bf 2.} The  corresponding normalized eigenfunctions admit the approximation 
\begin{equation}\label{phin}
\begin{aligned}
\varphi_n(t) = &  \sqrt 2 \sin\big( \nu_{n} t+\pi \eta_H\big) +  \\
& \int_0^{\infty} f_0(u) e^{-  t \nu_n u} du +
(-1)^{n}\int_0^{\infty}    f_1(u) e^{-  (1-t) \nu_nu}du+ n^{-1}  r_n(t),
\end{aligned}
\end{equation}
where  $f_0(\cdot)$ and $f_1(\cdot)$ are explicit functions defined in \cite{ChK}  and 
\begin{equation}\label{etaH}
 \eta_H  :=   \frac {H -\frac 3 2}{ 4}   \frac {   H-\frac 12 }{ H+\frac 1 2}.
\end{equation}
The residual  $r_n(t)$ in \eqref{phin} is bounded uniformly over $n\in \mathbb{N}$ and $t\in [0,1]$ and 
\begin{equation}
\label{bndp}
\varphi_n(1) = (-1)^{n} \sqrt{2H+1} \big(1+O(n^{-1})\big), \quad n\to\infty.
\end{equation}

\end{thm}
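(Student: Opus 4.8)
The plan is to analyze the eigenproblem $\int_0^1 K(s,t)\varphi(s)\,ds=\lambda\varphi(t)$ by exploiting the convolution structure hidden in the fBm covariance. Writing $K(s,t)=\tfrac12(t^{2H}+s^{2H})-\tfrac12|t-s|^{2H}$, the genuinely translation-invariant part $-\tfrac12|t-s|^{2H}$ has an explicit distributional Fourier symbol proportional to $|\xi|^{-(2H+1)}$, while the remaining terms $\tfrac12(t^{2H}+s^{2H})$ couple only to the endpoints. This dichotomy suggests that for large $n$ the eigenfunction is governed in the interior of $[0,1]$ by a constant-coefficient pseudo-differential relation, forcing $\varphi$ to oscillate as $\sqrt2\sin(\nu t+\phi)$ with $\lambda=\sin(\pi H)\Gamma(2H+1)\,\nu^{-(2H+1)}$, the constant being exactly the coefficient in the Fourier transform of $|x|^{2H}$. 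This already pins down the leading term of \eqref{lambda}; all the remaining work is in resolving the phase $\phi$ and the endpoint corrections.

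First I would apply the Laplace transform to the eigenequation, extending $\varphi$ by zero outside $[0,1]$. Because the kernel is of convolution type away from the boundary, the transform turns the equation into a functional equation relating $\widehat\varphi(p)$ to boundary data at $t=0$ and $t=1$, with a coefficient given by the symbol of $K$. Poles of the resulting meromorphic expression on the imaginary $p$-axis, at $p=\pm i\nu$, encode the oscillation frequency, while the branch structure inherited from $|\xi|^{-(2H+1)}$ produces, after inversion, the two Laplace-type integrals $\int_0^\infty f_0(u)e^{-t\nu u}\,du$ and $\int_0^\infty f_1(u)e^{-(1-t)\nu u}\,du$ in \eqref{phin} — the boundary layers attached to $t=0$ and $t=1$. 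Identifying $f_0$ and $f_1$ in closed form amounts to a Wiener--Hopf factorization of the symbol on each half-line.

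Next, the frequencies $\nu_n$ and the phases are fixed by a \emph{secular} (quantization) condition: the eigenfunction must be simultaneously compatible with the half-line problems at $0$ and at $1$, and matching the two factorizations forces $\nu\approx\pi n+\pi\gamma_H$, so that $\gamma_H$ is the sum of the phase shifts picked up at the two endpoints. The particular trigonometric combination in \eqref{ellH} and \eqref{etaH} — notably $\ell_H$ as the ratio of two sines — is the argument of the complex factor arising from the factorization of $|\xi|^{-(2H+1)}$, in which the angles $\tfrac{\pi}{2}\tfrac{H\mp1/2}{H+1/2}$ come directly from the exponent $2H+1$ of the symbol; tracking this argument yields both the interior phase $\eta_H$ and the frequency correction $\gamma_H$. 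The uniform bound on the residual $r_n$ then follows by estimating the next-order terms in the asymptotic inversion of the transform and in the root-finding for $\nu_n$.

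Finally, \eqref{bndp} follows by evaluating \eqref{phin} at $t=1$. There the boundary layer anchored at $0$ is $O(e^{-c\nu_n})$ and negligible; the sine term becomes $\sqrt2\sin(\nu_n+\pi\eta_H)=(-1)^n\sqrt2\sin\big(\pi(\gamma_H+\eta_H)\big)+O(n^{-1})$ by the frequency asymptotics; and the boundary layer anchored at $1$ contributes $(-1)^n\int_0^\infty f_1(u)\,du$. Summing these and simplifying with the factorization constants gives the clean limit $(-1)^n\sqrt{2H+1}$. I expect the main obstacle to be the Wiener--Hopf factorization together with the rigorous control of the error terms: producing explicit $f_0,f_1$ and, above all, proving that the ansatz of an interior oscillation plus two exponential boundary layers captures the true eigenfunction \emph{uniformly} up to $O(n^{-1})$ — rather than merely at leading order — is the delicate analytic heart of the argument.
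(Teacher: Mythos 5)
First, a point of order: Theorem~\ref{thm-ChK} is not proved in this paper at all --- it is quoted verbatim from \cite{ChK} (``Theorem 2.1 in \cite{ChK}'') and used as an input to Theorem~\ref{main-thm}. So there is no in-paper proof to compare against, and your attempt can only be judged against the argument of \cite{ChK}.

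On the merits, your outline correctly identifies the mechanism that the proof in \cite{ChK} actually uses: the splitting of $K$ into the convolution part $-\tfrac12|t-s|^{2H}$, whose distributional Fourier symbol $\sin(\pi H)\Gamma(2H+1)|\xi|^{-2H-1}$ accounts for the constant in \eqref{lambda}; a Laplace-transform reduction in the spirit of Ukai \cite{Ukai}, in which poles at $\pm i\nu$ produce the interior oscillation and the branch cut of the symbol produces the two boundary-layer integrals in \eqref{phin}; and a matching condition at the two endpoints that quantizes $\nu_n=\pi n+\pi\gamma_H+O(n^{-1})$. As a road map this is faithful, but as a proof it has a genuine gap which you yourself flag: every quantitative assertion of the theorem --- the explicit values of $\ell_H$, $\gamma_H$, $\eta_H$, the closed forms of $f_0$ and $f_1$, the constant $\sqrt{2H+1}$ in \eqref{bndp}, and above all the \emph{uniform} $O(n^{-1})$ control of $r_n(t)$ --- hinges on actually carrying out the factorization (in \cite{ChK} this is done by solving an associated Riemann boundary value problem for a sectionally holomorphic function, not a ready-made Wiener--Hopf identity) and on proving that the resulting integro-algebraic system has a unique solution close to the ansatz. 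None of that is executed here, and the evaluation of $\varphi_n(1)$ by ``simplifying with the factorization constants'' presupposes constants you have not computed. A smaller inaccuracy: the terms $\tfrac12(t^{2H}+s^{2H})$ do not ``couple only to the endpoints''; they contribute a smooth rank-two perturbation $\tfrac12 t^{2H}\int_0^1\varphi(s)\,ds+\tfrac12\int_0^1 s^{2H}\varphi(s)\,ds$, and absorbing it into the transform analysis (in \cite{ChK}, after differentiating the equation) is part of the work rather than a negligible boundary effect.
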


\medskip 

In principle, a modification of the spectral approximation technique from \cite{Ukai}, \cite{ChK} is applicable to the fractional 
Brownian bridge, \cite{N19}. The goal of this paper is to suggest a general approach to answering question \eqref{Q},
based on the the equations \eqref{beq} and \eqref{phit}. We will demonstrate how the spectral asymptotics of the bridge 
can be derived from that of the base process, using the case of the fBm  as a particular example. 
Specifically, we will prove the following result:

\begin{thm}
\label{main-thm}\

\medskip
\noindent
{\bf 1.} For the fractional Brownian bridge with $H\in (0,1)$, the ordered sequence of the eigenvalues satisfies  
\begin{equation}
\label{blambda}
\widetilde \lambda_n =  \frac{\sin (\pi H)\Gamma(2H+1)}{ \widetilde \nu_n^{2H+1}}, \qquad n=1,2,...
\end{equation}
where  
$
\widetilde \nu_n =  \pi n + \pi\widetilde \gamma_H+O(n^{-1})
$
as $n\to\infty$ and 
$$
  \widetilde \gamma_H :=   \gamma_H +    \frac {H }{H+\frac 1 2}.
$$

\medskip
\noindent
{\bf 2.} The corresponding eigenfunctions admit the approximation 
\begin{multline}
\label{tildephin}
 \widetilde\varphi_n(t)  =  
\sqrt{2}    \sin\big(\widetilde \nu_n t+\pi \eta_H\big)+ 
\int_0^{\infty} f_0(u)  e^{-\widetilde \nu_n   t u} du + \\
(-1)^n \cos \pi (\widetilde \gamma_H-\gamma_H) \int_{0}^{\infty} f_1(u)  e^{-   \widetilde \nu_n (1-t)u } 
 du + n^{-1}\log n \,\widetilde r_n(t),
\end{multline}
where the residual $\widetilde r_n(t)$ is bounded, uniformly over $n\in \mathbb{N}$ and $t\in [0,1]$. 
\end{thm}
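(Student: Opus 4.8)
The plan is to exploit the transcendental equation \eqref{beq} together with the eigenfunction expansion \eqref{phit}, feeding into them the precise asymptotics of the base process provided by Theorem \ref{thm-ChK}. The central idea is that the characteristic equation for the bridge eigenvalues is
$$
F(\widetilde\lambda):=\sum_{k=1}^\infty \frac{\lambda_k}{\lambda_k-\widetilde\lambda}\,\varphi_k(1)^2=0,
$$
and by \eqref{lambda} and the boundary value \eqref{bndp} we have $\lambda_k\asymp \nu_k^{-(2H+1)}$ with $\nu_k=\pi k+\pi\gamma_H+O(k^{-1})$, and $\varphi_k(1)^2=(2H+1)(1+O(k^{-1}))$. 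First I would substitute the ansatz $\widetilde\lambda=\widetilde\lambda_n$, expressed through a candidate phase $\widetilde\nu_n$, and seek to show that $F$ vanishes precisely when $\widetilde\nu_n$ interlaces the $\nu_k$'s in the manner dictated by the claimed shift $\pi\widetilde\gamma_H$. The natural mechanism is that the series $F(\widetilde\lambda)$ can be summed asymptotically: replacing $\varphi_k(1)^2$ by its limit $2H+1$ and $\lambda_k/(\lambda_k-\widetilde\lambda)$ by its leading form in terms of $\nu_k^{-(2H+1)}$ and $\widetilde\nu_n^{-(2H+1)}$, the leading part of the sum becomes a ratio of trigonometric functions whose zeros yield the phase condition. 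Concretely, I expect the dominant balance to reduce to an equation of the form $\cos(\cdot)=0$ or a tangent relation that isolates $\pi\widetilde\gamma_H=\frac{1-2H}{4}\pi+2\arcsin\frac{\ell_H}{\sqrt{1+\ell_H^2}}$; note that this is exactly $2\pi\eta_H+\pi$ shifted appropriately relative to $\gamma_H$, reflecting the fact that the Dirichlet condition $\widetilde\varphi(1)=0$ replaces the mixed condition of the base problem.

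For the first part of the theorem, the quantitative step is to control the error in the summation. I would split $F$ into a principal sum (using the limiting constants and the first-order asymptotics of $\nu_k$) plus a remainder coming from the $O(k^{-1})$ corrections in both $\nu_k$ and $\varphi_k(1)^2$. The principal sum should be evaluated in closed form using a Mittag--Leffler type expansion (a partial-fraction identity for a meromorphic function with poles at the $\nu_k$'s), producing a clean transcendental equation for $\widetilde\nu_n$. The remainder sum, after separating the resonant term $k=n$ (which is handled by the local analysis near the pole $\lambda_n$), I expect to be of order $O(n^{-1}\log n)$; the logarithmic factor presumably arises from the tail $\sum_{k}\frac{1}{k}\cdot\frac{1}{|k-n|}$-type contribution that accumulates the $O(k^{-1})$ eigenfunction corrections across the spectrum. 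An implicit function or Rouch\'{e} argument then upgrades the solvability of the approximate equation to the existence of a genuine root $\widetilde\nu_n=\pi n+\pi\widetilde\gamma_H+O(n^{-1}\log n)$, and a monotonicity/interlacing argument guarantees that these are all the roots and that they are correctly ordered.

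For the second part, once $\widetilde\nu_n$ is pinned down, the eigenfunction is read off from \eqref{phit} by substituting the base eigenfunction approximation \eqref{phin}. Here the structure is: the $\sqrt2\sin(\nu_k t+\pi\eta_H)$ main terms sum, via the same partial-fraction machinery, into a new sinusoid $\sqrt2\sin(\widetilde\nu_n t+\pi\eta_H)$ with the shifted frequency, while the boundary-layer integrals $\int_0^\infty f_0(u)e^{-t\nu_k u}\,du$ and $(-1)^k\int_0^\infty f_1(u)e^{-(1-t)\nu_k u}\,du$ from \eqref{phin} get resummed against the kernel $\lambda_k/(\lambda_k-\widetilde\lambda)$. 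The phase mismatch between the base shift $\gamma_H$ and the bridge shift $\widetilde\gamma_H$ is precisely what generates the $\cos\pi(\widetilde\gamma_H-\gamma_H)$ and $\sin\pi(\widetilde\gamma_H-\gamma_H)$ coefficients and the new boundary functions $\widetilde f_1,\widetilde g_1$; I would define these functions exactly so that they absorb the resummed edge contributions, which is where the formulas \eqref{tildef1} and \eqref{psifla} come in.

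The main obstacle I anticipate is the careful asymptotic summation of the series in \eqref{phit} and \eqref{beq} uniformly in $t$ and $n$, and in particular tracking the logarithmic loss. The difficulty is twofold: near the resonant index $k\approx n$ the summand is large and must be expanded locally, while the tail requires uniform control of the interplay between the slowly decaying $O(k^{-1})$ corrections to the eigendata and the kernel $\lambda_k/(\lambda_k-\widetilde\lambda)$, whose denominator is small for $k$ near $n$. Establishing that these two regimes patch together to give a uniformly bounded residual $\widetilde r_n(t)$ — especially the $t$-uniformity of the boundary-layer resummation, where the exponential integrands concentrate near the endpoints — is where the technical heart of the argument lies, and it is precisely this step that degrades the $O(n^{-1})$ accuracy of the base process to the $O(n^{-1}\log n)$ accuracy claimed for the bridge.
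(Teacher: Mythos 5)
Your overall strategy coincides with the paper's: you start from the characteristic equation \eqref{beq} and the expansion \eqref{phit}, replace the base eigendata by the asymptotics of Theorem \ref{thm-ChK}, resum the resulting series by residue calculus to get a phase condition, and then argue that the exact and perturbed roots are close. Your diagnosis of the logarithmic loss is also exactly the paper's: the $O(k^{-1})$ corrections to $\varphi_k(1)^2$ accumulate against the near-resonant denominator into a sum of the type $\sum_k \tfrac1k\,\tfrac1{|k-n|}$, which after dividing by the local slope of the characteristic function (of order $n^{-2H}$ against an oscillation of order $n^{-2H-1}\log n$) gives $O(n^{-1}\log n)$; this is Lemma \ref{lem2}. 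The interlacing/monotonicity argument you invoke to enumerate the roots is likewise how the paper locates a unique root in each interval $(\mu_n,\mu_{n+1})$.

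There is, however, one genuine gap in the mechanism you propose for the summation step, and it is precisely the point the paper singles out as the essential difference from the Brownian case. For $H\ne\tfrac12$ the summand involves $z^{2H+1}$, which has a branch cut, so the function you would need for a ``Mittag--Leffler type expansion / partial-fraction identity for a meromorphic function'' is not meromorphic and the closed-contour residue theorem does not apply. The paper instead integrates over the boundary of a half-disk in the right half-plane; the price is an extra integral along the imaginary axis which is \emph{not} negligible on the relevant scale. Its limit is computed by a second residue calculation (a keyhole contour for $\int_0^\infty t^{1/(2H+1)}\big((t+c)^2+1\big)^{-1}dt$) and equals a constant proportional to $\ell_H$, so the perturbed equation is $\ctg\big(\pi(\widetilde\mu-\gamma_H)\big)=-\ell_H+O(n^{-1})$ rather than $\ctg(\cdot)=0$. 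Your proposed dominant balance --- ``a ratio of trigonometric functions whose zeros yield the phase condition'' --- would set the cotangent to zero and return only the naive $\pi/2$ shift of the Brownian bridge; it cannot produce the $2\arcsin\big(\ell_H/\sqrt{1+\ell_H^2}\big)$ term in $\pi\widetilde\gamma_H$ that you correctly state as the target. The same imaginary-axis integral, not a reshuffling of the pole residues, is what generates the new boundary-layer functions $\widetilde f_1$ and $\widetilde g_1$ in \eqref{tildef1} and \eqref{psifla}; without it the eigenfunction formula \eqref{tildephin} would be missing its third line entirely. So the skeleton of your argument is right, but the branch-cut contribution is a missing idea rather than a detail, and it carries the nontrivial part of both conclusions of the theorem.
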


\bigskip 

\begin{rem}\

\medskip 
\noindent
{\bf a.} The eigenvalues of the fBm and its bridge differ by a constant shift 
in the second order asymptotic term of the corresponding ``frequencies'',
$$
 \widetilde\gamma_H-  \gamma_H    =   \frac {H }{H+\frac 1 2}.
$$
It reduces to the familiar constant $\frac 1 2$ in the standard Brownian case $H=\frac 1 2$, cf. \eqref{eigBm} and \eqref{eigBb}. 
The residuals in \eqref{phin} and \eqref{tildephin} differ by the $\log n$ factor, which 
may well be an artifact of the approach. 

\medskip 
\noindent
{\bf b.}
The eigenfunctions of the bridge inherit the oscillatory term in \eqref{tildephin} from the corresponding term of the 
base process \eqref{phin}, however, with a frequency shift. Another modification occurs in the integral terms, 
which are responsible for the boundary layer: their contribution is asymptotically negligible away from the endpoints
of the interval, but is persistent near the boundary.  For the base process, these terms force the eigenfunctions to vanish at 
$t=0$ and approach the alternating values \eqref{bndp} at $t=1$; for the bridge, they push the eigenfunctions to zero at both endpoints. 
Tracing back the definitions of all the functions involved,  it can also be seen that the boundary layer vanishes for $H=\frac 1 2$ and 
the leading asymptotic term in \eqref{tildephin} reduces to the familiar formula \eqref{eigBb} for the standard Brownian bridge.

\medskip 
\noindent 
{\bf c.} 
An additional insight into the problem can be gained by considering a slightly more general perturbation of the base covariance operator, 
cf. \eqref{barK} and \eqref{KQ},
\begin{equation}\label{Kgamma}
\widetilde K(s,t) = K(s,t) + Q \frac{K(s,1)K(t,1)}{K(1,1)}
\end{equation}
with $Q\ge -1$, which corresponds to the bridge for $Q=-1$. 
It can be seen that for $Q>-1$, the method of \cite{ChK} still applies and, with the fBm as the base process, 
shows that the eigenvalues of the perturbed operator coincide with those of the base operator, at least up to the second order.
Further asymptotic terms are negligible, but not uniformly over $Q$, at $Q=-1$ the residual explodes and the second order asymptotics 
changes completely. Hence the case of the bridge is ``critical'', just as it is in the method of  A.Nazarov in \cite{N09b}. 
The alternative technique in this paper applies to the perturbed operator in \eqref{Kgamma}, without breaking  down at $Q=-1$.
Time reversibility of the fractional Brownian bridge, also does not play any role in our approach, at least, explicitly. 

\medskip 
\noindent
{\bf d.} The basic equation \eqref{beq}, which relates the eigenvalues of the bridge to those of the base process, 
involves some information on the eigenfunctions, namely their values at the endpoint of the interval $\varphi_n(1)$. The analogous equation, 
relating the eigenvalues of the base process to those of the bridge has the form
$$
\sum_{n\ge 1} \frac{\widetilde \beta_n^2}{\lambda-\widetilde \lambda_n}=1, \quad \lambda\in\Real_+
$$
where 
$$
\widetilde \beta_n = \int_0^1 K(1,s) \widetilde \varphi_n(s)ds.
$$
Hence by calculations, similar to those in this paper, the exact asymptotics of $\lambda_n$ can be derived, if the sufficiently 
precise asymptotics of both $\widetilde \lambda_n$ and $\widetilde \beta_n$ is known. 

\end{rem}

\section{Proof of Theorem \ref{main-thm}}

\subsection{A preview}

Before giving the full proof, let us consider the special case $H=\frac 1 2$, 
corresponding to the standard Brownian motion. Let us see how the formulas \eqref{eigBb} can be derived from 
\eqref{eigBm}, using the equations \eqref{beq} and \eqref{phit}. 
To this end, it will be convenient to change the variables to 
$$
\mu_k := \frac 1 {\pi\sqrt{\lambda_k}} = k-\tfrac 1 2\quad \text{and}\quad \widetilde \mu = \frac 1{\pi\sqrt{\widetilde \lambda}},
$$
so that in view of \eqref{eigBm}, equation \eqref{beq} becomes
\begin{equation}\label{eqexp} 
g(\widetilde \mu):= \sum_{k=1}^\infty \frac{1 }{(k-\frac 1 2)^2 -\widetilde \mu^2}   =0.
\end{equation}
The explicit formula for this series is well known,
\begin{equation}\label{tgclassic}
g(\widetilde \mu)= \dfrac {\pi }{2\widetilde \mu}\tan  (\pi\widetilde \mu)
\end{equation}
and can be derived by means of the residue calculus. 
It will be instructive to recall the calculation: define the function   
$$
f(z) = \frac {\ctg\big(  \pi(z+\tfrac 1 2)\big) }{z^2-\widetilde \mu^2}, \quad z\in \mathbb{C},
$$
which is meromorphic with the simple poles at $z_\pm =\pm \widetilde \mu$
and $z_k = k -\frac 1 2$. Integrating $f(z)$ over a circular contour of radius $R$ and taking the limit $R\to\infty$,
we obtain 
$$
\Res\{f;z_+\}+\Res\{f;z_-\}+ \sum_{k\in \mathbb{Z}}\Res\{f; z_k\}=0,
$$
by Jordan's lemma. Here the residues are 
\begin{align*}
& \Res\{f;z_+\} = \Res\{f;z_-\}= -\frac {1 }{2\widetilde \mu}\tan  (\pi\widetilde \mu) \\
& \Res\{f; z_k\} = \frac 1 \pi\frac {1 }{(k-\frac 1 2)^2-\widetilde \mu^2}.
\end{align*}
Since the sequence $(k-\tfrac 1 2)^2-\widetilde \mu^2$, $k\in \mathbb{Z}$ is symmetric around $\frac 1 2$, the expression  
\eqref{tgclassic} is obtained and the equation \eqref{eqexp} produces the roots $\widetilde \mu_n = \pi n$, $n=1,2,...$, 
confirming the formula for the eigenvalues in \eqref{eigBb}.

\medskip

The corresponding eigenfunctions can be found using \eqref{phit}:
$$
\widetilde\varphi_n(t) = 2c_n \widetilde \mu^2_n\sum_{k=1}^\infty \frac{(-1)^k \sin \mu_k\pi t}{\widetilde \mu^2_n  -\mu_k^2 }=
2c_n  n^2\sum_{k=1}^\infty \frac{(-1)^k \sin (k-\frac 1 2)\pi t}{n^2  -(k-\frac 1 2)^2 }.
$$
Using similar residue calculus, the series can be computed exactly:
$$
\widetilde\varphi_n(t)=-c_n n \pi (-1)^n\sin \pi nt,
$$
which agrees with the formula in \eqref{eigBb}, after normalizing to the unit norm.

\medskip

The more general case $H\in (0,1)$ is different in two aspects:
\medskip

\begin{enumerate}
\addtolength{\itemsep}{0.7\baselineskip}

\item The function $g(\widetilde \mu)$ for $H\ne \frac 1 2$ involves a power function with non-integer  
exponent (see \eqref{Geq} below) and hence, in addition to the poles, has a discontinuity across the branch cut.  
Consequently the Cauchy theorem cannot be applied as before and a different contour is to be chosen. 
A natural choice is the boundary of the half disk, which lies in the right half plane, but such integration 
produces an additional integral term along the imaginary axis. Asymptotic analysis shows that its contribution is 
non-negligible on the relevant scale for all values of $H$ but $\frac1 2$; thus it is ``invisible'' in the case of 
standard Brownian motion.

\item The exact formulas for the eigenvalues and eigenfunctions for $H\ne \frac 1 2$ are unavailable beyond their precise
asymptotics as in \eqref{lambda}-\eqref{ellH}. It is then reasonable to consider first the perturbed version of the equation 
\eqref{beq}, in which  $\lambda_k$ and $\varphi_k$ are replaced with the corresponding asymptotic approximations from Theorem \ref{thm-ChK}.
This gives the main terms in the eigenvalues formula \eqref{blambda}. It remains then to show that the roots of the perturbed and 
the exact equations get close asymptotically on the suitable scale.  Once the asymptotics of $\widetilde \lambda_n$
becomes available, it can be plugged into \eqref{phit}, along with the expressions for $\lambda_k$ and $\varphi_k(t)$,
to construct the approximations for the bridge eigenfunctions. 

\end{enumerate}

\subsection{The eigenvalues}
Let us change the variable to $\widetilde \mu$ defined by the relation  
$$
\widetilde\lambda =  \frac{\sin (\pi H)\Gamma(2H+1)}{ (\pi\widetilde \mu)^{2H+1}},
$$
in which case equation \eqref{beq} becomes
\begin{equation}\label{Geq}
g(\widetilde \mu):=\sum_{k=1}^\infty \frac{ \varphi_k(1)^2 }{\mu_k^{2H+1}-\widetilde \mu^{2H+1}}  =0,
\end{equation}
where we defined $\mu_k := \nu_k/\pi$. 
Observe that $g(\cdot)$ is continuous, increases on $\Real_+\setminus \{ \mu_k, k\in \mathbb{N}\}$ and 
$$
\lim_{\widetilde\mu\searrow \mu_k }g(\widetilde\mu)=-\infty\quad \text{and}\quad 
\lim_{\widetilde\mu\nearrow \mu_k  }g(\widetilde\mu)=+\infty,\quad k\in \mathbb{N}.
$$ 
Consequently it has the unique root $\widetilde \mu_n$ at each one of the intervals $\big(\mu_n,\mu_{n+1}\big)$.

\medskip 

In view of the asymptotics \eqref{lambda}-\eqref{phin}, it makes sense to consider first the ``approximate'' perturbed equation
\begin{equation}\label{tildeGeq}
g^a(\widetilde \mu):=\sum_{k=1}^\infty \frac{ 2H+1 }{(k + \gamma_H)^{2H+1}-\widetilde\mu^{2H+1}}  =0,
\end{equation}
where the eigenvalues and eigenfunctions of the base process are replaced with their asymptotic approximations. 
The next step is to argue that the roots of the exact and perturbed equations \eqref{Geq} and \eqref{tildeGeq} are
close on an appropriate scale, asymptotically as $n\to\infty$.
These steps are implemented in Lemmas \ref{lem1} and \ref{lem2} respectively, which together imply assertion 1 of Theorem \ref{main-thm}.

\begin{lem}\label{lem1} The unique root $\widetilde \mu^a_n\in (n+\gamma_H,n+1+\gamma_H)$ of equation \eqref{tildeGeq} satisfies 
\begin{equation}\label{tildemua}
  \widetilde \mu_n^a =  n   +\gamma_H + \frac {H}{H+\frac 1 2}  + O(n^{-1}), \quad n\to\infty.
\end{equation}
\end{lem}

\begin{proof}

A useful expression can be found for $ g^a(\widetilde\mu)$ using the residue calculus. To this end
note that the principal branch of the function 
$$
f(z) := \frac{\ctg \big(\pi (z-\gamma_H)\big)}{z^{2H+1}-\widetilde\mu^{2H+1}}, \quad z\in \mathbb{C}\setminus \Real_-
$$ 
is meromorphic on the right half plane with simple poles at $z_0:=\widetilde\mu$ and $z_k := k+\gamma_H$, $k=1,2,...$
Note that $z_k >0$ since $\gamma_H\in (-\frac 3 4, -\frac 1 2)$  for $H\in (0,1)$. 
Integrating this function over the boundary of the half disk of radius $R\in \mathbb{N}$
in the right half plane gives
$$
\int_{iR}^{-iR} f(z)dz + \int_{C_R} f(z)dz =2\pi i \Res\big\{f,z_0\big\} + 2\pi i\sum_{k=1}^{R} \Res\big\{f, z_k\big\},
$$ 
where $C_R$ denotes the semicircular arc. Since $\ctg(\cdot)$ is bounded on $C_R$, by Jordan's lemma 
the integral over $C_R$ vanishes as $R\to\infty$ and we obtain 
\begin{equation}
\label{fla}
\frac 1 {2\pi } \int_{-\infty}^{\infty} f(it)d t  =     -  \Res\big\{f,z_0\big\} -  \sum_{k=1}^{\infty} \Res\big\{f, z_k\big\}.
\end{equation}
Computing the residues  
\begin{align*}
& 
\Res\big\{f,z_0\big\}=
\ctg (\pi (\widetilde\mu-\gamma_H))\lim_{z\to \widetilde\mu}  \frac{z-\widetilde\mu}{z^{2H+1}-\widetilde\mu^{2H+1}} = 
\ctg (\pi (\widetilde\mu-\gamma_H))\frac {\widetilde\mu^{-2H}}{2H+1}
\\
&
\Res\big\{f,z_k\big\}= 
\frac{1}{(k+\gamma_H)^{2H+1}-\widetilde\mu^{2H+1}} \lim_{z\to 0} z \frac{\cos (\pi z)}{\sin (\pi z)}=
\frac 1 \pi\frac{1}{(k+\gamma_H)^{2H+1}-\widetilde\mu^{2H+1}}
\end{align*}
and plugging these expressions into \eqref{fla} gives the formula 
\begin{align*}
g^a(\widetilde\mu)
= & -\frac {2H+1} {2  } \int_{-\infty}^{\infty} f(it)d t      - \frac {\pi}{ \widetilde\mu^{2H}} \ctg (\pi (\widetilde\mu-\gamma_H))= \\
& 
=-\frac {2H+1}   {\widetilde\mu^{2H }}
\Re\left\{
\int_{0}^{\infty} 
\frac{\ctg (\pi (i\tau \widetilde\mu -\gamma_H))}{(i\tau)^{2H+1}-1}
d \tau 
\right\}
 - \frac {\pi}{ \widetilde\mu^{2H}} \ctg (\pi (\widetilde\mu-\gamma_H)).
\end{align*}
Hence the equation \eqref{tildeGeq} takes the form
\begin{equation}
\label{ctgeq}
\ctg (\pi (\widetilde\mu-\gamma_H)) 
=
-\frac {2H+1} { \pi }   \Re
\left\{\int_{0}^{\infty} \frac{\ctg (\pi (i\tau\widetilde\mu-\gamma_H))}{(i\tau)^{2H+1}-1}d \tau \right\}.
\end{equation}
Let $\widetilde\mu_n^a$  be the unique root of \eqref{tildeGeq} in the interval $(n+\gamma_H,n+1+\gamma_H)$, then 
\begin{equation}
\label{eqeq}
\begin{aligned}
\int_{0}^{\infty} \frac{\ctg (\pi (i\tau \widetilde \mu_n^a -\gamma_H))}{(i\tau )^{2H+1}-1} d \tau & =
\frac 1 i\int_{0}^{\infty} 
\frac
{
1+ e^{ -2\pi  (\tau \widetilde \mu_n^a +   i\gamma_H) } 
}
{
1-e^{ -2\pi  (\tau \widetilde\mu_n^a +  i\gamma_H) } 
}
\frac{1}{(i\tau )^{2H+1}-1} d \tau \\
&
=
\frac 1 i\int_{0}^{\infty} 
\frac{1}{(i\tau )^{2H+1}-1} d \tau + R_n.
\end{aligned}
\end{equation}
The residual $R_n$ here satisfies
\begin{equation}\label{Rnest}
\begin{aligned}
\widetilde \mu^a_n\big|R_n\big| \le & 
%
\int_{0}^{\infty} 
\left|
\frac
{
2 e^{ -2\pi  t } 
}
{
1-e^{ -2\pi  t - 2\pi i\gamma_H} 
}
\frac{1}{(it/\widetilde \mu^a_n )^{2H+1}-1 } 
\right| d t \le \\
&
\int_{0}^{\infty} 
\frac{2 e^{ -2\pi  t }}{\big|(it/\widetilde \mu^a_n )^{2H+1}-1\big|} d t
\xrightarrow[n\to\infty]{} \frac 1 \pi,
\end{aligned}
\end{equation}
where the second inequality holds since $\cos(2\pi \gamma_H)\le 0$.
The real part of the integral on the right hand side of \eqref{eqeq} can be computed explicitly:
\begin{align*}
&
\frac 1 i\int_{0}^{\infty} \frac{1}{(i\tau )^{2H+1}-1} d \tau =   
\frac 1 {i^{2H+2}}\int_{0}^{\infty} \frac{1}{ \tau^{2H+1}-1/i^{2H+1}} d \tau =\\
&
\frac 1 {i^{2H+2}}\frac 1{2H+1}  \int_{0}^{\infty} \frac{u^{ \frac {1}{2H+1}-1}}{ u-1/i^{2H+1}} d u=
 \frac { \pi}{2H+1} \frac { e^{ -\frac {1}{2H+1}\pi i} }{\sin \frac {\pi}{2H+1} }  
\end{align*}
and hence 
\begin{equation}\label{intexp}
\Re\left\{\frac 1 i\int_{0}^{\infty} 
\frac{1}{(i\tau )^{2H+1}-1} d \tau\right\}= \frac { \pi}{2H+1}  \ctg \frac {\pi}{2H+1}.
\end{equation}

Plugging \eqref{Rnest} and \eqref{intexp} into \eqref{eqeq} and \eqref{ctgeq} and recalling that 
$\widetilde\mu_n^a-\gamma_H\in (n,n+1)$, we obtain the asymptotics claimed in \eqref{tildemua}.
\end{proof}

The following refinement of  \eqref{bndp} allows to obtain slightly more accurate estimate for the residual in 
the eigenvalues approximation.

\begin{lem}
There exists a finite integer $n_0$, such that 
\begin{equation}
\label{bndp_exact}
\varphi_n(1) = (-1)^{n} \sqrt{2H+1}, \quad \forall n\ge n_0.
\end{equation}
\end{lem}

\begin{proof}
The $m$-th iterate of the kernel satisfies 
$$
K^{(m)}(1,1) = \sum_{n=1}^\infty \lambda_n^m \varphi^2_n(1)\quad \text{and}\quad
\int_0^1 K^{(m)}(s,s)ds = \sum_{n=1}^\infty \lambda_n^m, \quad m\in \mathbb{N}.
$$
A direct calculation shows that for the kernel \eqref{Kst}, which is self-similar in the sense  
$$
K(cs,ct)=c^{2H} K(s,t)\quad \forall c\in \Real_+,
$$
the following identity holds
$$
\int_0^1 K^{(m)}(s,s)ds = \frac 1 {2H+1} K^{(m)}(1,1).
$$
Therefore for all $m\in \mathbb{N}$, 
\begin{equation}\label{series}
\sum_{n=1}^\infty \lambda_n^m \big(\varphi^2_n(1)-(2H+1)\big) =0.
\end{equation}
Multiplying by $\lambda^m$ and summing up over $m$ we obtain 
$$
\sum_{n=1}^\infty a_n \frac{\lambda \lambda_n}{1-\lambda \lambda_n}  =0,\quad \lambda\in (0,\lambda_1^{-1}),
$$
where  $a_n:= \varphi^2_n(1)-(2H+1)$. Let $k_1\ge 1$ be the multiplicity of $\lambda_1$, then  
$$
\frac{\lambda \lambda_1}{1-\lambda \lambda_1} \sum_{n=1}^{k_1} a_n + \sum_{n=k_1+1}^\infty a_n \frac{\lambda \lambda_n}{1-\lambda \lambda_n} =0.
$$
The second term on the left hand side converges to a finite limit as $\lambda\nearrow \lambda_1^{-1}$, hence we 
must have $\sum_{n=1}^{k_1} a_n=0$.
Therefore, the first $k_1$ terms in series \eqref{series} vanish and we can repeat the argument for $\lambda_2$  and so on.
Asymptotics \eqref{lambda} implies that $\lambda_n$'s have unit multiplicity for all $n$ large enough and hence 
$a_n=\varphi_n^2(1)-(2H+1)=0$ for all such $n$'s. Equality \eqref{bndp_exact} follows from \eqref{bndp}.

\end{proof}

The next step is to show that the roots of \eqref{Geq} and \eqref{tildeGeq} are close on a suitable scale:

\begin{lem}\label{lem2}
Let $\widetilde \mu_n$ and $\widetilde \mu^a_n$ be the unique roots of equations \eqref{Geq} and \eqref{tildeGeq} in the interval 
$(\mu_n,\mu_{n+1})$. 
Then 
\begin{equation}\label{apprmu}
\widetilde \mu_n- \widetilde \mu^a_n = O(n^{-1}), \quad n\to\infty.
\end{equation}
\end{lem}

\begin{proof}
 
Suppose $f:I \mapsto \Real$ is a function on an open interval $I$ with $\frac {d}{dx}f(x)\ge r>0$ and 
a root $x_0\in I$. Let $h:I\mapsto \Real$ be a continuous strictly increasing function with $\sup_{x\in I}|f(x)-h(x)|\le b$ 
and assume that $[x_0-b/r, x_0+b/r]\subset I$. Then $h$ has a unique root $y_0$ and it satisfies 
\begin{equation}\label{bndosc}
|y_0-x_0|\le b/r.
\end{equation}

We will apply this elementary bound to $f:=g^a$ and $h:= g$ on the interval $I_n$ with the endpoints at $n+\widetilde \gamma_H \pm \delta$, 
where $\delta>0$ is fixed and chosen to be small enough so that $I_n\subset (\mu_n,\mu_{n+1})$. 
Recall that by Lemma \ref{lem1}, the unique root $\widetilde \mu^a_n\in (n+\gamma_H,n+1+\gamma_H)$ of $g^a$ 
belongs to $I_n$ for all sufficiently large $n$. The function $g^a$ is differentiable on $\Real_+ \setminus \{k+\gamma_H: k\in \mathbb{N}\}$
and 
\begin{equation}\label{qtagbnd}
\begin{aligned}
\inf_{\widetilde\mu\in I_n}\frac d {d\widetilde\mu}g^{a}(\widetilde\mu) = &
\inf_{\widetilde\mu\in I_n}\sum_{k=1}^\infty \frac{ (2H+1)^2\widetilde\mu^{2H} }{\big((k + \gamma_H)^{2H+1}-\widetilde\mu^{2H+1}\big)^2} \ge \\
&
\inf_{\widetilde\mu\in I_n}\frac{ \widetilde\mu^{2H} }{\big((n + \gamma_H)^{2H+1}-\widetilde\mu^{2H+1}\big)^2} \ge \\
&
\frac{ (n+\widetilde \gamma_H-\delta)^{2H} }{\big((n + \gamma_H)^{2H+1}-(n+\widetilde \gamma_H+\delta)^{2H+1}\big)^2} > c n^{-2H}
\end{aligned}
\end{equation}
with a constant $c>0$. 

Next let us estimate the oscillation of $g^a(\widetilde\mu)-g(\widetilde\mu)$ on $I_n$. In view of \eqref{bndp_exact},
\begin{align*}
&
g^a(\widetilde\mu)-g(\widetilde\mu) =  
\sum_{k=1}^{n_0} \frac{ (2H+1)-\varphi_k(1)^2 }{(k + \gamma_H)^{2H+1}-\widetilde\mu^{2H+1}}
+ \\
&
\sum_{k=1}^\infty \varphi_k(1)^2\frac{ \mu_k^{2H+1} -(k + \gamma_H)^{2H+1} }
{
\big(
(k + \gamma_H)^{2H+1}-\widetilde\mu^{2H+1}
\big)
\big(
\mu_k^{2H+1}-\widetilde\mu^{2H+1}
\big)} =: D_1(\widetilde\mu) + D_2(\widetilde\mu).
\end{align*}
Since the summation in the first term is finite, 
$$
\sup_{\widetilde \mu \in I_n}\big|D_1(\widetilde\mu)\big| = O(n^{-2H-1}).
$$
Due to the asymptotics $\mu_k  = k + \gamma_H+ O(k^{-1})$, the second term  satisfies 
$$
\sup_{\widetilde \mu\in I_n}\big|D_2(\widetilde\mu)\big|\lesssim  \sup_{\widetilde \mu\in I_n}S_{1,\infty}(\widetilde \mu),
$$
where we defined  
$$
S_{i,j}(\widetilde \mu) := 
\sum_{k=i}^j \frac{ k^{2H-1} }
{
\big|\big(
(k + \gamma_H)^{2H+1}-\widetilde\mu^{2H+1}
\big)
\big(
\mu_k^{2H+1}-\widetilde\mu^{2H+1}
\big)\big|}.
$$
Hereafter $x\lesssim y$ stands for inequality $x\le C y$ with a constant $C$, whose value is of no importance. 
Let us split the sum $S_{1,\infty}(\widetilde \mu)$ into three parts and estimate each one separately. 
Obviously, $\sup_{\widetilde \mu\in I_n} S_{n,n}(\widetilde \mu) = O(n^{-2H-1})$ and  
\begin{align*}
\sup_{\widetilde \mu\in I_n}S_{1,n-1}(\widetilde \mu) \lesssim &
\sum_{k=1}^{n-1} \frac{ k^{2H-1} }
{
 \big(
(k + \gamma_H)^{2H+1}-(n+\widetilde \gamma_H-\delta)^{2H+1}
\big)^2
} \lesssim   \\
&
n^{-2H-2} \int_0^{1-1/n} \frac {x^{2H-1}}{(x^{2H+1}-1)^2}dx = O(n^{-2H-1}), \quad n\to \infty.
\end{align*}
Similar estimate holds for $\sup_{\widetilde \mu\in I_n}S_{n+1,\infty}(\widetilde \mu)$ and therefore 
$$
\sup_{\widetilde \mu\in I_n}\big|D_2(\widetilde\mu)\big| = O(n^{-2H-1}).
$$ 
It follows that
$$
\sup_{\widetilde\mu \in I_n}\big|g^a(\widetilde\mu)-g(\widetilde\mu)\big|= O(n^{-2H-1}).
$$ 
Plugging this estimate and \eqref{qtagbnd} into \eqref{bndosc} gives the claimed asymptotics.
\end{proof}

\begin{rem}
Without refinement \eqref{bndp_exact}, the residual of order $O(n^{-1})$ in \eqref{apprmu} would have been inflated by $\log n$ factor. 
\end{rem}

\subsection{The eigenfunctions} 
Approximation \eqref{tildephin} is obtained by plugging the asymptotic expressions \eqref{lambda}, \eqref{phin} and \eqref{blambda}  
into the formula \eqref{phit}:
\begin{equation}\label{phimu}
\widetilde\varphi_n(t) = c_n \sum_{k=1}^\infty \frac{\lambda_k }{\lambda_k  -\widetilde \lambda_n}\varphi_k(1) \varphi_k(t)=-
c_n\widetilde \mu_n^{2H+1}\sum_{k=1}^\infty \frac{1 }{\mu_k^{2H+1}-\widetilde \mu_n^{2H+1}}\varphi_k(1) \varphi_k(t)
\end{equation}
where we set $\mu_n := \nu_n/\pi$ and $\widetilde \mu_n := \widetilde \nu_n/\pi$ as in Lemma \ref{lem1}.

\medskip

As before, we will first replace the exact values by their leading asymptotic terms and then 
argue that the error, thus introduced, is negligible on the suitable scale. To this end, define, cf. \eqref{phin} and \eqref{bndp_exact},
\begin{align}
\label{phi1}
\widetilde\varphi_n^{1,a}(t) = & -c_n
\widetilde \mu_n^{2H+1}\sqrt{2H+1}
\sqrt 2 \sum_{k=1}^\infty 
\frac
{ (-1)^k  \sin\big( \pi(k+\gamma_H)  t+\pi\eta_H\big)}
{ (k+\gamma_H)^{2H+1}-\widetilde\mu_n^{2H+1}    
}
\\
\label{phi2}
\widetilde\varphi_n^{2,a}(t) = &-c_n
\widetilde \mu_n^{2H+1}\sqrt{2H+1}
\int_0^{\infty} f_0(u) \left(
\sum_{k=1}^\infty 
\frac
{ (-1)^k e^{-   (k+\gamma_H) \pi t u}}
{ (k+\gamma_H)^{2H+1}-\widetilde\mu_n^{2H+1}
}
\right) 
du
\\
\label{phi3}
\widetilde\varphi_n^{3,a}(t) = &-c_n
\widetilde \mu_n^{2H+1}\sqrt{2H+1}
\int_0^{\infty}    f_1(u)
\left(
\sum_{k=1}^\infty 
\frac
{
 e^{-    (k+\gamma_H)\pi(1-t) u}
} 
{ 
(k+\gamma_H)^{2H+1}-\widetilde\mu_n^{2H+1}
}     
 \right)     du
\end{align}
where $\eta_H$ is the constant defined in \eqref{etaH}. 

\medskip

\begin{lem}\label{lem3.3}\
The function $\widetilde\varphi^a_n(t)=\widetilde\varphi_n^{1,a}(t)+\widetilde\varphi_n^{2,a}(t)+\widetilde\varphi_n^{3,a}(t)$,
satisfies  
\begin{equation}\label{general}
\begin{aligned}
&
\widetilde\varphi^a_n(t)  /\|\widetilde\varphi^a_n\| =  
\sqrt{2}    \sin\big(\widetilde \nu_n t+\pi\eta_H\big)+ 
\int_0^{\infty} f_0(u)  e^{-\widetilde \nu_n   t u} du + \\
&
(-1)^n \int_{0}^{\infty} e^{-   \widetilde \nu_n (1-t)u } 
\Big(
 \sin (\pi (\widetilde \gamma_H-\gamma_H))\widetilde f_1(u)  +
   \cos \pi (\widetilde \gamma_H-\gamma_H) f_1(u) 
\Big) du + \\
&
(-1)^n\sin  \pi (\widetilde \gamma_H-\gamma_H)\int_0^{\infty}   
\widetilde g_1\big(\widetilde \nu_n (1-t) u\big) f_1(u)  du 
+
n^{-1} \widetilde r_n(t),
\end{aligned}
\end{equation}
where $\widetilde f_1$ and $\widetilde g_1$ are explicit functions, defined in \eqref{tildef1} and \eqref{psifla} below, and 
the residual $\widetilde r_n(t)$ is bounded uniformly over $n\in \mathbb{N}$ and $t\in [0,1]$. 

\end{lem}

\begin{proof}
The claimed approximation is obtained by finding the leading term asymptotics of the functions in \eqref{phi1}-\eqref{phi3} 
and normalizing their sum by a suitable common factor. 

\medskip

\noindent
{ 1) \em Asymptotics of \eqref{phi1}}.
For a fixed $t\in [0,1]$ and $\widetilde\mu>0$, consider the series  
$$
h(\widetilde\mu):=\sum_{k=1}^\infty 
\frac
{ (-1)^k \sin\big(\pi(k+\gamma_H) t+\pi\eta_H\big)}
{  (k+\gamma_H)^{2H+1} -\widetilde\mu^{2H+1}   
}
,
$$
for which a closed form formula can be found by means of residue calculus as in Lemma \ref{lem1}.
To this end, consider the principal branch of the function 
\begin{equation}\label{ffla}
f(z):=\frac{\sin\big(\pi(z t+\eta_H)\big)}{z^{2H+1}-\widetilde\mu^{2H+1}}\frac {1} {\sin (\pi (z-\gamma_H))},\quad z\in \mathbb{C} \setminus \Real_-,
\end{equation}
which is meromorphic on the right half plane with simple poles at $z_0 := \widetilde\mu$ and $z_k = k + \gamma_H$, $k\in \mathbb{N}$.
Integrating $f(z)$ over the half disc boundary in the right half plane gives
$$
 \int_{iR}^{-iR}f(z)dz+\int_{C_R} f(z)dz = 2\pi i\Res\{f; z_0\} + 2\pi i\sum_{k=1}^R \Res\{f;z_k\},
$$
where $C_R$ stands for the  semicircular arc with radius $R\in \mathbb{N}$. The ratio of sines in \eqref{ffla}
is bounded for any $t\in [0,1]$ and therefore, applying Jordan's lemma, we get
\begin{equation}
\label{res2}
\frac1 {2\pi }\int_{-\infty}^{\infty}f(it)d t =  -\Res\{f; z_0\} -  \sum_{k=1}^\infty \Res\{f;z_k\},
\end{equation}
with the residues 
\begin{align*}
\Res\{f; z_0\} =\ &  
 \frac {\sin\big(\pi(\widetilde\mu t+\eta_H)\big)} {\sin (\pi (\widetilde\mu-\gamma_H))} 
 \frac 1 {2H+1}\frac 1{\widetilde\mu^{2H}}, \\
\Res\{f; z_k\} =\ &
\frac{\sin\big(\pi(k+\gamma_H) t+\eta_H)\big)}{(k+\gamma_H)^{2H+1}-\widetilde\mu^{2H+1}} \frac{(-1)^k}{\pi}.
\end{align*}
Plugging these expressions into \eqref{res2}, we get 
\begin{align*}
h(\widetilde\mu) = & - \frac {\pi} {2H+1} \frac 1{\widetilde\mu^{2H}} \frac {\sin\big(\pi(\widetilde\mu t+\eta_H)\big)} {\sin (\pi (\widetilde\mu-\gamma_H))} \\
&
- \frac 1 {\widetilde\mu^{2H}}\Re\left\{
\int_{0}^{\infty} \frac{1}{(iu )^{2H+1}-1}\frac {\sin\big(\pi(iu\widetilde\mu t+\eta_H)\big)} {\sin (\pi (iu\widetilde\mu-\gamma_H))}d u 
\right\}.
\end{align*}
The second term simplifies to  
\begin{align*}
& \Re\left\{\int_{0}^{\infty} \frac{1}{(iu )^{2H+1}-1}\frac {\sin\big(\pi(iu\widetilde\mu t+\eta_H)\big)} {\sin (\pi (iu\widetilde\mu-\gamma_H))}d u\right\} = \\
&
\int_{0}^{\infty} e^{-\pi  u\widetilde\mu (1-t) }\Re\left\{ \frac{e^{-i\pi(\eta_H +\gamma_H) }}{(iu )^{2H+1}-1}
\right\}d u + R(t,\widetilde\mu),
\end{align*}
with the residual satisfying  
$$
\widetilde\mu \big|R(t,\widetilde\mu)\big| \le \widetilde\mu \int_{0}^{\infty} \frac{2e^{ -\pi u\widetilde\mu    }}{\big|(iu )^{2H+1}-1\big|}d u=
\int_{0}^{\infty} \frac{2e^{ -\pi s    }}{\big|(is /\widetilde\mu)^{2H+1}-1\big|}d s\xrightarrow[\widetilde\mu\to\infty]{}\frac 2 \pi.
$$

\medskip

Plugging these expressions back gives
\begin{equation}
\begin{aligned}\label{phi1a}
\widetilde\varphi_n^{1,a}(t)\simeq &
\sqrt{2}    \sin\big(\pi(\widetilde \mu_n t+\eta_H)\big)+ \\
& (-1)^n \sin \big(\pi (\widetilde \gamma_H-\gamma_H)\big)
\int_{0}^{\infty}  e^{-  \pi \widetilde \mu_n (1-t)u }
 \widetilde f_1(u)du + n^{-1} \widetilde r_n^{(1)}(t),
\end{aligned}
\end{equation}
where $x\simeq y$ means $x= C y$ with a constant $C$ and we  normalized by the factor
\begin{equation}\label{cn}
\widetilde c_n := c_n \widetilde \mu_n \frac{\pi}{\sqrt{2H+1}} \frac {(-1)^n}{\sin \pi(\widetilde \gamma_H-\gamma_H)}.
\end{equation}
It can be seen (as in the calculation, concluding section 5.1.6. in \cite{ChK}), that the norm of the integral term 
in \eqref{phi1a} is of order $O(n^{-1})$ and hence the norm of $\widetilde\varphi_n^{1,a}$
is asymptotic to 1 as $n\to \infty$. 
The residual $\widetilde r_n^{(1)}(t)$ is uniformly bounded over $n\in \mathbb{N}$ and $t\in [0,1]$
and the function $\widetilde f_1$ is given by the formula
\begin{equation}\label{tildef1}
\widetilde f_1(u):=\frac  {2H+1}{\pi}\sqrt{2}\Re\left\{ \frac{e^{-i\pi(\eta_H +\gamma_H) }}{(iu )^{2H+1}-1}
\right\}.
\end{equation}

\medskip 
\noindent
2) {\em Asymptotics of \eqref{phi2}}.
A closed form expression for the series  
$$
h(\widetilde\mu):=\sum_{k=1}^\infty 
\frac
{ (-1)^k }
{ (k+\gamma_H)^{2H+1}- \widetilde\mu^{2H+1}
}
e^{-   (k+\gamma_H) \pi t u}
$$
can be found by integrating the principal branch of the function 
$$
f(z):= 
\frac
{ e^{-   z \pi t u} }
{ z^{2H+1}- \widetilde\mu^{2H+1}
}
\frac 1{\sin (\pi (z-\gamma_H))}, \qquad z\in \mathbb{C}\setminus \Real_-
$$
over the half disk boundary in the right half plane. As before,
$$
\frac1 {2\pi }\int_{-\infty}^{\infty}f(it)d t =  -\Res\{f; z_0\} -  \sum_{k=1}^\infty \Res\{f;z_k\},
$$
with the same poles as defined above. The residues are given by  
\begin{align*}
& \Res\{f; z_0\} = 
\frac 1 {2H+1} \frac 1{\widetilde\mu^{2H}}
\frac {e^{-   \widetilde\mu \pi t u}}{\sin (\pi (\widetilde\mu-\gamma_H))}
\\
&
\Res\{f; z_k\} = \frac 1 \pi
\frac
{(-1)^k }
{ (k+\gamma_H)^{2H+1}- \widetilde\mu^{2H+1}
}e^{-   (k+\gamma_H) \pi t u} 
\end{align*}
and therefore 
\begin{align*}
h(\widetilde\mu) = & 
-\frac \pi {2H+1} \frac 1{\widetilde\mu^{2H}}\frac {e^{-   \widetilde\mu \pi t u}}{\sin (\pi (\widetilde\mu-\gamma_H))}
\\
&
- \frac 1{\widetilde\mu^{2H}}\Re
\left\{
\int_{0}^{\infty}
\frac
{ e^{-   is\widetilde\mu \pi t u} }
{ (is )^{2H+1}- 1
}
\frac 1{\sin (\pi (is\widetilde\mu-\gamma_H))}
d s 
\right\}.
\end{align*}
The integral term  satisfies 
$$
\left|
\int_{0}^{\infty}
\frac
{ e^{-   is\widetilde\mu \pi t u} }
{ (is )^{2H+1}- 1
}
\frac 1{\sin (\pi (is\widetilde\mu-\gamma_H))}
d s 
\right| \le  \frac 1 {\widetilde\mu}
\int_{0}^{\infty}
\frac
{ 2e^{-\pi   s} }
{\big| (is/\widetilde\mu )^{2H+1}- 1\big|
}
d s 
$$
and hence, normalizing by the constant \eqref{cn}, we get
$$
\widetilde\varphi_n^{2,a}(t) \simeq 
\int_0^{\infty} f_0(u)  e^{-   \widetilde \mu_n \pi t u} du + n^{-1} \widetilde r_n^{(2)}(t)
$$
with a uniformly bounded residual $\widetilde r_n^{(2)}$.

\medskip 
\noindent
3) {\em Asymptotics of \eqref{phi3}}. An explicit formula for the series 
$$
h(\widetilde\mu):= \sum_{k=1}^\infty 
\frac
{
e^{-    (k+\gamma_H)\pi(1-t) u}
}
{ 
(k+\gamma_H)^{2H+1}-\widetilde\mu^{2H+1}
}
$$
is obtained by integrating the principal branch of the function 
$$
f(z):= 
\frac
{e^{-    z\pi(1-t) u}}
{ 
z^{2H+1}-\widetilde\mu^{2H+1}
}
\ctg (\pi (z-\gamma_H)),\qquad z\in \mathbb{C}\setminus \Real_-
$$
over the same contour as above:
$$
\frac 1 {2\pi}\int_{-\infty}^{\infty} f(it )dt    = -   \Res\{f; z_0\}-    \sum_{k=1}^\infty \Res\{f; z_k\}.
$$
The residues are 
\begin{align*}
\Res\{f; z_0\}  &= \frac 1 {2H+1}\frac 1 {\widetilde\mu^{2H}}
e^{-    \widetilde\mu\pi(1-t) u}
\ctg (\pi (\widetilde\mu-\gamma_H))
\\ 
\Res\{f; z_k\}  &=
\frac 1\pi 
\frac
{e^{-    (k+\gamma_H)\pi(1-t) u}}
{ 
(k+\gamma_H)^{2H+1}-\widetilde\mu^{2H+1}
}
\end{align*}
and therefore 
\begin{align*}
h(\widetilde\mu)= &  
-   \frac \pi {2H+1}\frac 1 {\widetilde\mu^{2H}}e^{-    \widetilde\mu\pi(1-t) u}\ctg (\pi (\widetilde\mu-\gamma_H))
 \\
&
-\frac 1 {\widetilde\mu^{2H}}\Im\left\{
 \int_{0}^{\infty}
\frac
{e^{-    is\widetilde\mu\pi(1-t) u}}
{ 
(is)^{2H+1}-1
}
ds
\right\}
-\frac 1 {\widetilde\mu^{2H}} R(\widetilde\mu).
\end{align*}
where the function $R(\widetilde\mu)$ is bounded,
\begin{align*}
\big|R(\widetilde\mu)\big|\le \frac 1 {\widetilde\mu}
 \int_{0}^{\infty}
\frac
{2e^{   -2\pi s  }}
{ 
\big|(is/\widetilde\mu)^{2H+1}-1\big|
}
ds.
\end{align*}
Plugging these into \eqref{phi3} and normalizing by \eqref{cn} gives 
\begin{align*}
\widetilde\varphi_n^{3,a}(t) \simeq & 
 (-1)^n\sin (\pi (\widetilde \gamma_H-\gamma_H))\int_0^{\infty}    f_1(u)
\widetilde g_1\big(\widetilde \mu_n \pi(1-t) u\big)du 
+ \\
&
(-1)^n\cos \pi (\widetilde \gamma_H-\gamma_H)
\int_0^{\infty}    f_1(u) e^{-    \widetilde \mu_n\pi(1-t) u} du + n^{-1} \widetilde r_n^{(3)}(t)
\end{align*}
where we defined 
\begin{equation}\label{psifla}
\widetilde g_1 (x) =
\frac  {2H+1}{\pi}\Im\left\{    \int_{0}^{\infty}
\frac
{e^{-    is x}}
{ 
(is)^{2H+1}-1
}ds
\right\}.
\end{equation}

\end{proof}

The expression in \eqref{general} can be simplified using the structure, specific to
the fBm. Tracing back the definitions in \cite[Theorem 2.1]{ChK}, we have 
\begin{equation}\label{f1u}
f_1(u) = \frac{\sqrt{2H+1}}{2\pi i}X_0(-u) \Big(\frac 1 {\Lambda_0^+(u)}-\frac 1 {\Lambda_0^-(u)}\Big),
\end{equation}
where $\Lambda_0^\pm(u)$ are the limits of the function
$$
\Lambda_0(z) = z + z^{\alpha-2} \begin{cases}
e^{\frac{1-\alpha} 2 \pi i} & \arg(z)\in (0,\pi) \\
e^{-\frac{1-\alpha} 2 \pi i} & \arg(z)\in (-\pi,0)
\end{cases}
$$
as $z\to u\in \Real$ in the upper and the lower half-planes and 
$$
X_0(z) = \exp \left(\frac 1 \pi \int_0^\infty \frac{\arg(\Lambda_0^+(u))}{t-z}dt\right), \quad z\in \mathbb{C}\setminus \Real_+.
$$
The latter function satisfies 
\begin{equation}\label{X0}
\big|X_0(i)\big| = \sqrt{\frac{2H+1}{2}} \quad \text{and}\quad \arg(X_0(i)) = \frac{2H-1}{8}\pi.
\end{equation}

\begin{lem}
For the function $f_1(u)$, defined in \eqref{f1u}, 
$$
\int_{0}^{\infty} e^{-   \widetilde \nu_n (1-t)u }  \widetilde f_1(u)   du +  
\int_0^{\infty}   \widetilde g_1\big(\widetilde \nu_n (1-t) u\big) f_1(u)  du =0.
$$
\end{lem}
\begin{proof}
For $\widetilde g(\cdot)$ as in \eqref{psifla},
$$
I(t):=  \int_0^{\infty}   \widetilde g_1\big(\widetilde \nu_n (1-t) u\big) f_1(u)  du = 
\frac  {2H+1}{\pi} \,
\Im\left(
\int_0^\infty \frac {J\big(s \widetilde \nu_n (1-t)\big)} {(is)^{2H+1}-1}
ds
\right),
$$
where we defined 
\begin{align*}
J(a) := & \int_0^\infty e^{-    i a u} f_1(u)  du = \\
&
\frac{\sqrt{2H+1}}{2\pi i} \int_0^\infty e^{-    i a u} X_0(-u) \Big(\frac 1 {\Lambda_0^+(u)}-\frac 1 {\Lambda_0^-(u)}\Big)  du, \quad a\in \Real_+.
\end{align*}
Summing up the integrals of the functions 
$$
H_1(z) = e^{iaz} \frac{X_0(z)}{\Lambda_0(z)}\quad \text{and}\quad H_2(z) = e^{-iaz} \frac{X_0(-z)}{\Lambda_0(-z)}
$$
over semicircular contours in the upper and the lower half-planes respectively, the following formula is obtained by the 
usual residue calculus 
$$
J(a) = \Res\big\{H_1; i\big\} + \Res\big\{H_2; -i\big\} = 2e^{-a} \frac {X_0(i)}{\Lambda_0'(i)}.
$$
Plugging this and the explicit formulas \eqref{X0}, we get
$$
I(t) = \sqrt{2} \frac {2H+1}\pi \int_0^\infty e^{-i s\widetilde \nu_n (1-t)} \Im\left(\frac{e^{ \frac {2H-1}{8} \pi i}}{(is)^{2H+1}-1}\right)ds.
$$
In view of the formula \eqref{tildef1}, the claimed equality follows, since $\eta_H+\gamma_H = -\frac 1 2 - \frac {2H-1}{8}$.
\end{proof}

Finally, it is left to check that the eigenfunctions of the bridge are asymptotic to the expressions found in Lemma \ref{lem3.3}: 

\begin{lem}
For any $H\in (0,1)$, 
$$
\left| \frac{\widetilde\varphi_n(t)}{\|\widetilde\varphi_n\|} - \frac {\widetilde \varphi^a_n(t)}{\|\widetilde \varphi^a_n\|} \right|
 \le C n^{-1}\log n, \qquad t\in [0,1]
$$
for some constant $C$.
\end{lem}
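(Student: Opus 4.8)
The plan is to reduce the statement to a single uniform estimate on the \emph{unnormalized} difference $\widetilde\varphi_n-\widetilde\varphi^a_n$, and then to rerun the summation arguments of Lemma \ref{lem2}. Writing $\widetilde\varphi_n=-c_n\widetilde\mu_n^{2H+1}S_n$ and $\widetilde\varphi^a_n=-c_n\widetilde\mu_n^{2H+1}S^a_n$, where
$$
S_n(t)=\sum_{k=1}^\infty\frac{\varphi_k(1)\varphi_k(t)}{\mu_k^{2H+1}-\widetilde\mu_n^{2H+1}},\qquad
S^a_n(t)=\sqrt{2H+1}\sum_{k=1}^\infty\frac{(-1)^k\,\phi_k(t)}{(k+\gamma_H)^{2H+1}-\widetilde\mu_n^{2H+1}},
$$
and $\phi_k(t)$ stands for the three leading (non-residual) terms of \eqref{phin} with $\nu_k$ replaced by $\pi(k+\gamma_H)$, I would start from the pointwise splitting
$$
\frac{\widetilde\varphi_n(t)}{\|\widetilde\varphi_n\|}-\frac{\widetilde\varphi^a_n(t)}{\|\widetilde\varphi^a_n\|}
=\frac{\widetilde\varphi_n(t)-\widetilde\varphi^a_n(t)}{\|\widetilde\varphi_n\|}
+\widetilde\varphi^a_n(t)\Big(\frac1{\|\widetilde\varphi_n\|}-\frac1{\|\widetilde\varphi^a_n\|}\Big).
$$
By Lemma \ref{lem3.3} the normalized approximation is uniformly bounded in $t$, so $|\widetilde\varphi^a_n(t)|\lesssim\|\widetilde\varphi^a_n\|$, and $\|\widetilde\varphi^a_n\|\sim|\widetilde c_n|\sim|c_n|\,n$ since its leading term $\sqrt2\sin(\widetilde\nu_n t+\pi\eta_H)$ is non-degenerate in $L^2(0,1)$. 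Using $|\,\|\widetilde\varphi_n\|-\|\widetilde\varphi^a_n\|\,|\le\|\widetilde\varphi_n-\widetilde\varphi^a_n\|_{L^2}\le\sup_t|\widetilde\varphi_n-\widetilde\varphi^a_n|$, both terms are controlled by $\sup_t|\widetilde\varphi_n-\widetilde\varphi^a_n|/\|\widetilde\varphi_n\|$. Hence it suffices to prove
$$
\sup_{t\in[0,1]}\big|S_n(t)-S^a_n(t)\big|\lesssim n^{-2H-1}\log n,
$$
which yields $\sup_t|\widetilde\varphi_n-\widetilde\varphi^a_n|\lesssim|c_n|\log n=o(|c_n|\,n)$; this in turn forces $\|\widetilde\varphi_n\|\gtrsim|c_n|\,n$ and produces the asserted rate $n^{-1}\log n$.

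Next I would split the termwise difference in $S_n-S^a_n$ into two families, mirroring the decomposition $D_1+D_2$ of \eqref{gag}. The first family collects all \emph{numerator} discrepancies while keeping the denominator $(k+\gamma_H)^{2H+1}-\widetilde\mu_n^{2H+1}$: replacing $\varphi_k(1)$ by $(-1)^k\sqrt{2H+1}$ costs $O(k^{-1})$ by \eqref{bndp}; discarding the residual $k^{-1}r_k(t)$ of \eqref{phin} costs $O(k^{-1})$ uniformly in $t$; and replacing $\nu_k$ by $\pi(k+\gamma_H)$ inside the sine and the two boundary-layer integrals costs $O(k^{-1})$ as well, since $|\nu_k-\pi(k+\gamma_H)|=O(k^{-1})$ and $f_0,f_1$ are integrable (for the exponential terms one uses $tu\,e^{-t\pi(k+\gamma_H)u}=O(k^{-1})$ uniformly in $t,u$, which in fact makes these errors $O(k^{-2})$). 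Since $|\varphi_k(t)|\lesssim1$ uniformly, each such term is $O(k^{-1})$ over a single denominator — precisely the $D_1$-structure. The second family is the denominator replacement $\mu_k^{2H+1}\to(k+\gamma_H)^{2H+1}$, producing the two-denominator terms of $D_2$-type with numerator $O(k^{2H-1})$, because $|\mu_k^{2H+1}-(k+\gamma_H)^{2H+1}|=O(k^{2H-1})$.

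The estimation of the two families then proceeds as in Lemma \ref{lem2}. Since $\widetilde\mu_n\in I_n$ for all large $n$ by Lemma \ref{lem2}, the denominator lower bounds established there apply with $\widetilde\mu=\widetilde\mu_n$, now uniformly in $t$. Splitting the sums at $k=n$ and comparing with integrals exactly as in the treatment of $A_n,B_n,R_n,Q_n$ gives the $D_1$-family $\lesssim n^{-2H-1}\log n$ and the $D_2$-family $\lesssim n^{-2H-1}$, whence $\sup_t|S_n-S^a_n|\lesssim n^{-2H-1}\log n$.

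I expect the main obstacle to be twofold. First, obtaining all numerator estimates \emph{uniformly in} $t\in[0,1]$, and in particular controlling the perturbation $\nu_k\to\pi(k+\gamma_H)$ inside the boundary-layer integrals near $t=1$, where the replaced exponentials are least contractive so that the smallness of the frequency error competes with their slow decay. Second, the near-resonant block $k\approx n$, where the denominators are smallest (of order $n^{2H}$) and the harmonic-type sum generates the unavoidable $\log n$ factor. Everything else is a routine repetition of the integral comparisons already carried out in Lemma \ref{lem2}, now carrying the extra bounded factors $\varphi_k(1)$ and $\varphi_k(t)$.
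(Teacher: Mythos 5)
Your proposal is correct and follows essentially the same route as the paper: reduce the normalized difference to a uniform bound on $\widetilde\varphi_n-\widetilde\varphi^a_n$, split the termwise discrepancy into numerator-type ($O(k^{-1})$ over one denominator) and denominator-type ($O(k^{2H-1})$ over two denominators) families, and recycle the integral-comparison estimates of Lemma \ref{lem2}, concluding with $\|\widetilde\varphi^a_n\|$ asymptotic to the normalizing constant. Your write-up is in fact somewhat more explicit than the paper's (which compresses the numerator bookkeeping into the single statement $|\varphi^a_k-\varphi_k|\le Ck^{-1}$ and the norm step into one line), but no new idea is involved.
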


\begin{proof}
Denote by $\varphi^a_n(t)$ the leading asymptotic term in the eigenfunctions approximation \eqref{phin} for the base process, 
\begin{equation}\label{vaprhibnd}
\big|\varphi^a_n(t)-\varphi_n(t)\big|= |r_n(t)|n^{-1}\le C n^{-1}
\end{equation}
with a constant $C$. Then by \eqref{phimu} and \eqref{cn},
\begin{align*}
\frac{\big|\widetilde \varphi_n(t)-\widetilde \varphi^a_n(t)\big|}{\widetilde c_n}\, \lesssim \;
&  \widetilde \mu_n^{2H }
\left|
\sum_{k=1}^\infty \frac{\varphi_k(1) \varphi_k(t) }{\mu_k^{2H+1}-\widetilde \mu_n^{2H+1}}
-
 \sum_{k=1}^\infty \frac{\varphi_k^a(1) \varphi_k^a(t) }{(k+\gamma_H)^{2H+1}-\widetilde \mu_n^{2H+1}}
\right| \lesssim\\
&
 \widetilde \mu_n^{2H }
\sum_{k=1}^\infty \Bigg|\frac{  \varphi_k(t) -\varphi_k^a(t) }{ \mu_k^{2H+1}-\widetilde \mu_n^{2H+1} }\Bigg| +
 \widetilde \mu_n^{2H }
\sum_{k=1}^\infty \Bigg|\frac{\varphi_k(1) -\varphi_k^a(1)}{ \mu_k^{2H+1}-\widetilde \mu_n^{2H+1} }\Bigg| +\\
& 
 \widetilde \mu_n^{2H }
\sum_{k=1}^\infty 
\Bigg|
\frac
{\mu_k^{2H+1}-(k+\gamma_H)^{2H+1}}
{\Big(\mu_k^{2H+1}-\widetilde \mu_n^{2H+1}\Big)\Big((k+\gamma_H)^{2H+1}-\widetilde \mu_n^{2H+1}\Big)}
\Bigg|.
\end{align*}
By calculations as in the proof of Lemma \ref{lem2}, the last two terms on the right hand side are shown to  be of order $O(n^{-1})$. 
The first term is of order $O(n^{-1}\log n)$, where $\log n$ factor is due to $O(n^{-1})$ bound 
in \eqref{vaprhibnd}. 
The claim now follows since $\|\widetilde \varphi^a_n\|/\|\widetilde c_n\| = 1+ O(n^{-1})$.

\end{proof}

\section*{Acknowledgement} We are grateful to A.I.Nazarov for bringing our attention to several related works, including 
\cite{Su72} and \cite{N09b}. This work has been supported supported by the ISF grant 558/13.


\def\cprime{$'$} \def\cprime{$'$} \def\cydot{\leavevmode\raise.4ex\hbox{.}}
  \def\cprime{$'$} \def\cprime{$'$} \def\cprime{$'$}

\end{document}